\documentclass[12pt,reqno,oneside]{amsproc}
\title[{Hausdorff measure of nodal sets}]{Upper bounds of nodal sets for Gevrey regular
	\\ parabolic equations}

\author[G.~Camliyurt]{Guher Camliyurt}
\address{Department of Mathematics\\
Virginia Polytechnic Institute and State University\\
Blacksburg, VA 24061
}
\email{gcamliyurt@vt.edu}

\author[I.~Kukavica]{Igor Kukavica}
\address{Department of Mathematics\\
	University of Southern California\\
	Los Angeles, CA 90089}
\email{kukavica@usc.edu}

\author[L.~Li]{Linfeng Li}
\address{Department of Mathematics\\
	University of California Los Angeles\\
	Los Angeles, CA 90095}
\email{lli265@math.ucla.edu}

\usepackage{enumitem}

\usepackage{fancyhdr}
\usepackage{comment}

  \chardef\forshowkeys=0
  \chardef\refcheck=0
  \chardef\showllabel=0
  \chardef\sketches=0

\ifnum\forshowkeys=1
  
  \usepackage[notcite,color]{showkeys}
\fi

\usepackage[margin=1.1in]{geometry}
\usepackage{amsmath, amsthm, amssymb}
\usepackage{times}
\usepackage{graphicx}
\usepackage[usenames,dvipsnames,svgnames,table]{xcolor}
\usepackage{marginnote}
\usepackage[unicode,breaklinks=true,colorlinks=true,linkcolor=black,urlcolor=black,citecolor=black]{hyperref}

\usepackage{tikz}

\ifnum\refcheck=1
  \usepackage{refcheck}
\fi

\begin{document}
\def\linfeng#1{\textcolor{red}{#1}}
\def\DD{\text{D}}
\def\PP{\mathcal{P}}
\def\dPP{\dot{\mathcal{P}}}
\def\LL{\mathcal L}
\def\qd{q_{\textrm D}^{}}
\def\AA{C_2}
\def\BB{(C_1+C_2)}
\def\pp{p}
\def\qq{q}
\def\MM{M}
\def\KK{K}
\def\CC{C}
\def\uu{\tilde u }
\def\vv{\tilde v }
\def\ww{\tilde w }
\def\ggg{u}
\def\UU{\tilde U}
\def\eps{\epsilon}
\def\QQ{{\bar Q}}

\def\PP{P}
\def\RR{\mathbb R}
\def\TT{\mathbb T}
\def\ZZ{\mathbb Z}
\def\erf{\mathrm{Erf}}
\def\red#1{\textcolor{red}{#1}}
\def\blue#1{\textcolor{blue}{#1}}
\def\mgt#1{\textcolor{magenta}{#1}}

\def\les{\lesssim}
\def\ges{\gtrsim}

\renewcommand*{\Re}{\ensuremath{\mathrm{{\mathbb R}e\,}}}
\renewcommand*{\Im}{\ensuremath{\mathrm{{\mathbb I}m\,}}}

\ifnum\showllabel=1
 \def\llabel#1{\marginnote{\color{lightgray}\rm\small(#1)}[-0.0cm]\notag}
\else
 \def\llabel#1{\notag}
\fi

\newcommand{\norm}[1]{\left\|#1\right\|}
\newcommand{\nnorm}[1]{\lVert #1\rVert}
\newcommand{\abs}[1]{\left|#1\right|}
\newcommand{\NORM}[1]{|\!|\!| #1|\!|\!|}

\newtheorem{Theorem}{Theorem}[section]
\newtheorem{Corollary}[Theorem]{Corollary}
\newtheorem{Definition}[Theorem]{Definition}
\newtheorem{Proposition}[Theorem]{Proposition}
\newtheorem{Lemma}[Theorem]{Lemma}
\newtheorem{Remark}[Theorem]{Remark}

\def\theequation{\thesection.\arabic{equation}}
\numberwithin{equation}{section}

\definecolor{myblue}{rgb}{.8, .8, 1}

\newlength\mytemplen
\newsavebox\mytempbox

\makeatletter
\newcommand\mybluebox{%
    \@ifnextchar[
       {\@mybluebox}%
       {\@mybluebox[0pt]}}

\def\@mybluebox[#1]{%
    \@ifnextchar[
       {\@@mybluebox[#1]}%
       {\@@mybluebox[#1][0pt]}}

\def\@@mybluebox[#1][#2]#3{
    \sbox\mytempbox{#3}%
    \mytemplen\ht\mytempbox
    \advance\mytemplen #1\relax
    \ht\mytempbox\mytemplen
    \mytemplen\dp\mytempbox
    \advance\mytemplen #2\relax
    \dp\mytempbox\mytemplen
    \colorbox{myblue}{\hspace{1em}\usebox{\mytempbox}\hspace{1em}}}

\makeatother

\def\biglinem{\vskip0.5truecm\par==========================\par\vskip0.5truecm}
\def\inon#1{\hbox{\ \ \ \ \ }\hbox{#1}}                
\def\and{\text{\indeq and\indeq}}
\def\onon#1{\text{~~on~$#1$}}
\def\inin#1{\text{~~in~$#1$}}

\def\startnewsection#1#2{\section{#1}\label{#2}\setcounter{equation}{0}}   
\def\sgn{\mathop{\rm sgn\,}\nolimits}    
\def\Tr{\mathop{\rm Tr}\nolimits}    
\def\div{\mathop{\rm div}\nolimits}
\def\curl{\mathop{\rm curl}\nolimits}
\def\dist{\mathop{\rm dist}\nolimits}
\def\card{\mathop{\rm card}\nolimits}  
\def\sp{\mathop{\rm sp}\nolimits}  
\def\supp{\mathop{\rm supp}\nolimits}
\def\indeq{\quad{}}           
\def\colr{\color{red}}
\def\colb{\color{black}}
\def\coly{\color{lightgray}}

\definecolor{colorgggg}{rgb}{0.1,0.5,0.3}
\definecolor{colorllll}{rgb}{0.0,0.7,0.0}
\definecolor{colorhhhh}{rgb}{0.3,0.75,0.4}
\definecolor{colorpppp}{rgb}{0.7,0.0,0.2}
\definecolor{coloroooo}{rgb}{0.45,0.0,0.0}
\definecolor{colorqqqq}{rgb}{0.1,0.7,0}
\def\colg{\color{colorgggg}}
\def\collg{\color{colorllll}}
\def\cole{\color{coloroooo}}
\def\cole{\color{black}}
\def\coleo{\color{colorpppp}}
\def\colu{\color{blue}}
\def\colc{\color{colorhhhh}}
\def\colW{\colb}   
\definecolor{coloraaaa}{rgb}{0.6,0.6,0.6}
\def\colw{\color{coloraaaa}}

\def\comma{ {\rm ,\qquad{}} }            
\def\commaone{ {\rm ,\quad{}} }          
\def\nts#1{{\color{red}\hbox{\bf ~#1~}}} 
\def\ntsf#1{\footnote{\color{colorgggg}\hbox{#1}}} 
\def\blackdot{{\color{red}{\hskip-.0truecm\rule[-1mm]{4mm}{4mm}\hskip.2truecm}}\hskip-.3truecm}
\def\bluedot{{\color{blue}{\hskip-.0truecm\rule[-1mm]{4mm}{4mm}\hskip.2truecm}}\hskip-.3truecm}
\def\purpledot{{\color{colorpppp}{\hskip-.0truecm\rule[-1mm]{4mm}{4mm}\hskip.2truecm}}\hskip-.3truecm}
\def\greendot{{\color{colorgggg}{\hskip-.0truecm\rule[-1mm]{4mm}{4mm}\hskip.2truecm}}\hskip-.3truecm}
\def\cyandot{{\color{cyan}{\hskip-.0truecm\rule[-1mm]{4mm}{4mm}\hskip.2truecm}}\hskip-.3truecm}
\def\reddot{{\color{red}{\hskip-.0truecm\rule[-1mm]{4mm}{4mm}\hskip.2truecm}}\hskip-.3truecm}

\def\tdot{{\color{green}{\hskip-.0truecm\rule[-.5mm]{2mm}{4mm}\hskip.2truecm}}\hskip-.2truecm}
\def\gdot{\greendot}
\def\bdot{\bluedot}
\def\ydot{\cyandot}
\def\rdot{\cyandot}

\def\fractext#1#2{{#1}/{#2}}

\def\quinn#1{\text{{\textcolor{colorqqqq}{#1}}}}
\def\igor#1{\footnote{\text{{\textcolor{colorqqqq}{#1}}}}}

%
%
%
%

\begin{abstract}
We
consider the size of the nodal set of the solution of the second order
parabolic-type equation with Gevrey regular coefficients. We provide an upper bound as a function of time. The dependence agrees with a sharp
upper bound when the coefficients are analytic.
\hfill \today
\end{abstract}

\maketitle

\setcounter{tocdepth}{2} 

\tableofcontents

\colb
\startnewsection{Introduction}{sec01}
We consider the size of the nodal (zero) set to the solution of the parabolic equation
\begin{align}
	u_t -\Delta u
	=
	w \cdot \nabla u
	+v u,
	\label{eq0}
\end{align}
in $\TT^d\times (0,T)$, where $d\geq 1$, with given $v$ and $w$ and the initial data $u(\cdot,0)=u_0\in H^1 (\TT^d)$. Under the assumption that $v$ and $w$ are $\frac{1}{\beta}$-Gevrey regular, we provide an upper bound on the $(d-1)$-dimensional Hausdorff measure of the nodal set. Our main result, Theorem~\ref{T01}, shows that for any $\beta\in (0,1]$ and $t\in (0,t_0]$, we have
\begin{align}
	\mathcal{H}^{d-1}
	(\{x\in \TT^d: u(x,t)=0\} )
	\leq
	C	\left(\frac{1}{t} \right)^{\fractext{1}{\beta}} \log^{2(\fractext{1}{\beta}-1)} \left(\frac{1}{t} \right)
	,
	\label{eq1}
\end{align}
for some constant $C>0$ depending on the initial condition of $u_0$,
and the regularity of $v$ and~$w$, assuming $t_0\in(0,1/2]$. 

In \cite{Y}, Yau conjectured that the Laplacian eigenfunctions on a compact smooth Riemannian manifold $M$ satisfy 
\begin{align}
	C_1 \lambda^{1/2}
	\leq
	\mathcal{H}^{d-1}
	(\{x\in M: u(x) = 0\})
	\leq
	C_2 \lambda^{1/2}
	,
   \label{EQ06}
\end{align}
where $C_1, C_2>0$ are some constants. For the eigenfunctions to the Laplacian equation $\Delta u = - \lambda u$, Donnelly and Fefferman established in \cite{DF1, DF2, DF3} the lower and upper bounds on Hausdorff measure of nodal sets on compact Riemannian manifolds, establishing Yau's conjecture for real analytic manifolds with or without boundary. In~\cite{K1}, the author obtained a sharp upper bound for arbitrary even order real analytic elliptic operator. More recently, Hezari provided in \cite{He} an upper bound for the size of nodal sets of eigenfunctions when the Riemannian manifold has Gevrey or quasianalytic regularity, using a previous result in \cite{IK1} on the size of the zero set for Gevrey functions. Note that the paper \cite{IK1} provides an upper bound for the zero set of a 1D parabolic equation with Gevrey coefficients. Motivated by \eqref{EQ06}, Nadirashvili conjectured in \cite{N} that
\begin{align*}
	\mathcal{H}^{d-1}
	(\{x\in B(0,1): u(x) = 0\}) 
	\geq 
	c,
\end{align*}
where $u$ is a harmonic function in the unit ball $B(0,1)$ and $c>0$ depends only on the dimension. 

In recent years, Logunov and Malinnikova achieved significant progress towards Yau's conjecture for smooth manifolds~\cite{Lo1,Lo2,LoM1}. In~\cite{Lo2}, Logunov proved Nadirashvili's conjecture and showed the lower bound in Yau's conjecture for compact $C^\infty$-smooth Riemannian manifold without boundary. In \cite{LoM1}, the authors established further estimates on the nodal sets of Laplace eigenfunctions; their result refined the upper bound in the Donnelly-Fefferman estimate when $d=2$ and yielded a lower bound when $d=3$. We refer the reader to the review of Yau's conjecture in \cite{LoM2} and for further results and references. Also, see \cite{D,Do,HL,HJ,HS,KL,L,LZ,TY} for other results on the size of the nodal sets of solutions of partial differential equations.



The study of nodal sets is closely related to the unique continuation for solutions of the parabolic equation.
However, functions of Gevrey class in general may not satisfy the strong unique continuation property. 
Namely, the function may have infinite order of vanishing even if it is not identically equal to zero.
In \cite{IK1}, the authors gave a quantitative estimate of unique continuation for one-dimensional parabolic equation with Gevrey coefficients. 
As an application, they provided an upper bound on the number of zeros with a polynomial dependence on the coefficients.
The same authors considered in \cite{IK2} a higher order elliptic equation with Gevrey coefficients. They obtained an upper bound on the Hausdorff length of the nodal sets of the solution in a domain in $\mathbb{R}^2$ with a polynomial dependence on the coefficients.
In~\cite{K3}, the author provided an upper bound of the nodal set to the parabolic equation $\partial_t u + \mathcal{A} u=0$, where $\mathcal{A}$ is strongly elliptic and real-analytic.

The goal of this paper is to provide an explicit upper bound of the Hausdorff measure for the nodal set of the solution to the parabolic equation \eqref{eq0} when $v$ and $w$ are Gevrey regular.
Setting the Gevrey exponent $1/\beta$ equal to~$1$ in \eqref{eq1}, we provide the same power in $1/t$ as in~\cite{K3}.
It is worth mentioning that the power in $1/t$ in the bound of
\cite{K3} is sharp
as it implies the upper bounds in~\cite{Do,DF1}.
Our result is obtained by using the quantitative estimate of unique continuation for Gevrey parabolic equations (cf.~Lemmas~\ref{Lku2}--\ref{Lku3}) and a Gevrey-type energy estimate lemma.
The proof of the main result benefits from the ideas used in~\cite{IK1}.

This paper is organized as follows.
In Section~\ref{sec02}, we introduce the parabolic equation and the assumptions, followed by the main result, Theorem~\ref{T01}, an upper bound on the size of the nodal set. In Section~\ref{sec03}, we recall and prove some preliminary lemmas and provide
the proof of the main result.

\startnewsection{G·evrey regular parabolic problem and the main result}{sec02}
We consider the solution to the parabolic equation
\begin{align}
	&u_t -\Delta u
	=
	w \cdot \nabla u
	+
	v u
	\comma
	(x,t)\in \Omega \times I,
	\label{para01}
	\\
	&
	u(x,0)=u_0(x)
	\comma
	x\in \Omega,
	\label{para0}
\end{align}
where $\Omega=\TT^d$ is the $d$-dimensional torus with side length $2\pi$ and $I=(0,t_0]$ is a given time interval, where $t_0 \in (0,1/2]$. We require that $u$, $v$, and $w$ are $2\pi$-periodic.
We assume that $u_0 \in H^1 (\TT^d)$ is not identically zero.
Suppose that there exist constants $M_0, M_1\geq1$ such that
\begin{align}
	\sup_{t\in I} 
	\Vert v(t) \Vert_{L^\infty (\TT^d)}
	\leq M_0
	\label{para03}
\end{align}
and
\begin{align}
	\sup_{t\in I} 
(	\Vert w(t) \Vert_{L^\infty (\TT^d) }
	+
	\Vert \nabla w(t) \Vert_{L^\infty (\TT^d) })
	\leq M_1.
	\label{para02}
\end{align}
For any $t\in I$, denote by
\begin{align}
	q_{\DD}(t)
	=
	\frac{\int_\Omega |\nabla u(x,t)|^2\,dx}{\int_\Omega u(x,t)^2 \,dx}
	\label{para04}
\end{align}
the Dirichlet quotient of $u$, and
\begin{align}
	q_0=\sup_{t\in I}
	q_{\DD}(t)
	\label{para05}
\end{align}
the uniform upper bound.
We identify functions $u\in L^2 (\TT^d)$ with their Fourier series
\begin{align}
	u(x)=\sum_{j\in \ZZ^d} u_j e^{ijx}
	\comma
	u_j\in \mathbb{C}^d
	\comma
	u_{-j}=\bar{u}_j.
   \llabel{EQ01}
\end{align}
For $u,v\in L^2 (\TT^d)$, we define  the scalar product
\begin{align}
	(u,v)= (2\pi)^d
	\sum_{j\in \ZZ^d}
	u_j 
	\bar{v}_j
   \llabel{EQ02}
\end{align}
and the norm $\Vert u\Vert_{L^2(\TT^d)}^2 = (u,u)$.
Let $A=-\Delta$. For fixed $t,\beta>0$, we introduce
the Gevrey class
\begin{align}
	D(e^{t A^{\beta/2}})
	=
	\biggl\{u\in L^2(\TT^d):
	\Vert e^{t A^{\beta/2}} u \Vert_{L^2}^2
	=
	(2\pi)^d 
	\sum_{j\in \ZZ^d}
	e^{2t |j|^{\beta}}
	|u_j|^2<\infty
	\biggr\}.
   \llabel{EQ03}
\end{align}
From here on, we fix $\beta \in (0,1]$ and assume that there exist constants $\delta, K_v, K_w>0$ such that
\begin{align}
	\sup_{t\in I}
	\Vert (A^d + I) e^{\delta A^{\beta/2}} v (t)\Vert_{L^2 (\TT^d)}
	\leq 
	K_v
	\label{EQ52}
\end{align}
and
\begin{align}
	\sup_{t\in I}
	\Vert (A^d + I) e^{\delta A^{\beta/2}} w(t)\Vert_{L^2 (\TT^d)}
	\leq 
	K_w.
	\label{EQ72}
\end{align}

Let $t\in I=(0,t_0]$. We are interested in the size of nodal set $\{x\in \TT^d: u(x,t) = 0\}$ of the nontrivial solution to \eqref{para01} and \eqref{para0} with Gevrey regular $v$ and~$w$.
Denote by
  \begin{equation}
   N(t)=\{x\in \TT^d: u(x,t) = 0\}
   \llabel{EQ04}
  \end{equation}
the nodal set, for which we provide the upper bound in the theorem stated next.  
Throughout this paper, we assume that $t_0\leq 1/CM_1^2$, where $C\geq2$ is the constant as in \cite[Lemma~2.5]{K2}.
We can do so without loss of generality since
if $t_0$ is larger, then we 
obtain \eqref{EQ26} below
for $t\leq 1/C M_1^2$ and
\eqref{EQ26} with $C$ on the right-hand side otherwise.

The following is the main result of the paper.
\cole
\begin{Theorem}
	\label{T01}
Suppose that $u(x,t)$ is the solution to \eqref{para01} and \eqref{para0}
on $\TT^d\times I$ with  $u_0 \in H^1 (\TT^d)$
not identically zero.
Assume that $v$ and $w$ satisfy \eqref{para03}--\eqref{para02} and \eqref{EQ52}--\eqref{EQ72}.
Then, for each $t\in I$, we have
	\begin{align}
		\mathcal{H}^{d-1}
		(N(t) )
		&\leq
		C	\left(\frac{1}{t} \right)^{\frac{1}{\beta}} \log^{2 (\frac{1}{\beta}-1)} \left(\frac{1}{t} \right)
		,
		\label{EQ26}
	\end{align}
where $C>0$ is a constant depending on $M_0$, $M_1$, $K_v$ and~$K_w$.
\end{Theorem}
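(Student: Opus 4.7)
The plan is to establish Theorem~\ref{T01} by combining a Gevrey-type energy estimate for the solution $u$ with the quantitative unique continuation statements of Lemmas~\ref{Lku2}--\ref{Lku3}, and then converting these analytic estimates into the Hausdorff bound via a covering/slicing argument in the spirit of~\cite{IK1}.

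First, I would establish a Gevrey-type energy estimate showing that the Gevrey regularity of the coefficients $v$ and $w$ quantified by \eqref{EQ52}--\eqref{EQ72} propagates to the solution. Specifically, for a suitable increasing function $\sigma(t)$ comparable to $c\, t^{\fractext{1}{\beta}}$, I would prove a bound of the form
\begin{align*}
\sup_{s \in [t/2,\, t]} \Vert e^{\sigma(t) A^{\beta/2}} u(\cdot,s)\Vert_{L^2(\TT^d)} \leq C \Vert u(\cdot, t/2)\Vert_{L^2(\TT^d)}.
\end{align*}
This is obtained by multiplying \eqref{para01} by $e^{2\sigma(t) A^{\beta/2}} u$ and integrating, using the algebra property of the Gevrey class $D(e^{\delta A^{\beta/2}})$ together with \eqref{EQ52}--\eqref{EQ72} to bound the contributions of the lower order terms $w\cdot \nabla u$ and $v u$. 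The key point is that $\sigma$ must be chosen small enough so that the dissipation $\Vert A^{1/2} e^{\sigma A^{\beta/2}} u\Vert_{L^2}^2$, together with a term coming from $\dot\sigma \Vert A^{\beta/4} e^{\sigma A^{\beta/2}} u\Vert_{L^2}^2$, absorbs the contribution of the lower order terms.

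Next, I would apply the quantitative unique continuation statements of Lemmas~\ref{Lku2}--\ref{Lku3} to transfer bounds on the global $L^2$ norm to pointwise lower bounds. For each ball $B(x_0,r)\subset \TT^d$ with $r$ comparable to a small multiple of $\sigma(t)$, this yields a lower bound of the form
\begin{align*}
\sup_{B(x_0,r)} |u(\cdot,t)| \geq \Vert u(\cdot,t)\Vert_{L^2(\TT^d)} \exp\bigl(-C \sigma(t)^{-1}\log^{a}(1/t)\bigr),
\end{align*}
for some exponent $a=a(\beta)$, with constants controlled by $q_0$, $M_0$, $M_1$, $K_v$, and $K_w$. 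The Dirichlet quotient plays the role of the frequency throughout.

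Third, the nodal set estimate is carried out by slicing. For a unit direction $e$, the $(d-1)$-dimensional Hausdorff measure of $N(t)$ is controlled by the integral over a transversal hyperplane of the number of zeros of the one-dimensional Gevrey function $s\mapsto u(y+se,t)$. Covering $\TT^d$ by slabs of width comparable to $\sigma(t)$, on each slab one reduces to bounding the number of zeros of a one-dimensional Gevrey function of class $\fractext{1}{\beta}$ on an interval of length $\sim\sigma(t)$; from \cite{IK1}, this count is at most $C\log^{\fractext{1}{\beta}}(M/m)$, where $M$ is the $L^\infty$ norm over the full interval and $m$ the norm over a smaller fixed subinterval. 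Substituting the bounds produced in Steps~1 and~2, and summing over the $O(\sigma(t)^{-(d-1)})$ slabs, yields \eqref{EQ26}.

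The main obstacle is the precise tracking of logarithmic factors through the chain of estimates so as to produce the sharp exponent $2(\fractext{1}{\beta}-1)$ on $\log(1/t)$ rather than a larger power. This requires an optimal balancing between the scale $r$ used in the covering, the Gevrey radius $\sigma(t)$, and the growth allowed in the Gevrey energy inequality. As a sanity check, when $\beta=1$ the logarithmic correction vanishes and the bound reduces to $C/t$, consistent with the sharp estimate for real analytic coefficients obtained in \cite{K3}.
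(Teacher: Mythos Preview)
Your overall architecture—Gevrey energy estimate, quantitative unique continuation from Lemmas~\ref{Lku2}--\ref{Lku3}, then one-dimensional zero counting on lines plus a covering—is exactly the skeleton of the paper's proof. But two of your quantitative inputs are wrong, and as stated the argument would not close with the exponent in~\eqref{EQ26}.

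First, the Gevrey radius does not scale like $t^{1/\beta}$. The natural parabolic smoothing gives $\Vert e^{\sigma A^{\beta/2}}u(t)\Vert_{L^2}\le e^{C\sigma^{2/(2-\beta)}t^{-\beta/(2-\beta)}}\Vert u_0\Vert_{L^2}$, so at unit cost $\sigma$ is of order $t^{\beta/2}$, not $t^{1/\beta}$. The paper does not try to make this cost $O(1)$; it keeps the Gevrey parameter fixed at the $\delta$ coming from the coefficient hypotheses \eqref{EQ52}--\eqref{EQ72} and simply carries the factor $C^{\delta^{2/(2-\beta)}t^{-\beta/(2-\beta)}}$ through (Lemma~\ref{Lgevrey}). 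Second, the one-dimensional zero bound ``at most $C\log^{1/\beta}(M/m)$'' is not a valid statement for Gevrey-$1/\beta$ functions, and \cite{IK1} does not contain such a formula: for $\beta<1$ Gevrey functions can have compact support, so any bound must also involve the ratio of the interval length to the Gevrey radius. The paper does not import a black-box zero count; it argues directly via Hermite interpolation. If $u$ has $n$ zeros on a segment of length $\le 4r$, then $\Vert u\Vert_{L^\infty}\le (Cr)^n(n!)^{-1}\Vert u^{(n)}\Vert_{L^\infty}$, and the Gevrey bound gives $\Vert u^{(n)}\Vert_{L^\infty}\le C^{\,t^{-\beta/(2-\beta)}}(n+2d)!^{1/\beta}\Vert u(t)\Vert_{L^2}$. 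Combining this with the unique continuation lower bound $|u(q_j)|\ge C^{-K^2\log(1/r)/t}\Vert u(t)\Vert_{L^2}$ produces a self-consistency inequality in $n$ and $r$ which is then \emph{violated} by choosing $r=t^{1/\beta-1}\bigl(M\log^{1/\beta-1}(1/t)\bigr)^{-1}$ and $n=n_0\sim K^2\log(1/r)/t$; this is what forces the sharp balance and the exponent $2(1/\beta-1)$.

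Two further differences in execution: the paper's covering scale is this $r$, not the Gevrey radius, and the passage from line-intersection counts to $\mathcal H^{d-1}$ is done via the elementary geometric Lemma~\ref{L03} (using $2d$ reference points $p_{\pm j}\approx re_{\pm j}$) rather than a Crofton-type integral over a hyperplane.
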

\colb

The theorem is proven in the next section.


Here we comment on the boundedness of the Dirichlet quotient~$q_\DD$ in \eqref{para04}.
As in \cite[p.~463]{FS}, we have
  \begin{equation*}
   \frac12
   \frac{d}{dt}
   q_{\DD}
   +
   \left\Vert
    (-\Delta-q_{\DD}) \frac{u}{\Vert u\Vert_{L^2}}
   \right\Vert^2
   =
   \frac{1}{\Vert u\Vert^2}
   \int_{\Omega}
   (w\cdot \nabla u + v u)
   (-\Delta-q_{\DD}) u
  \end{equation*}
which implies by Cauchy-Schwarz inequality
  \begin{equation*}
   \frac{d}{dt}
   q_{\DD}
   \leq
   C (
      \Vert w\Vert_{L^\infty}^2
      q_{\DD}
      +\Vert v\Vert_{L^\infty}^2)
   ,   
  \end{equation*}
and this gives an explicit bound on $q_\DD(t)$ for
$t \in (0,t_0]$
depending on~$q_\DD(t_0)$.

\startnewsection{Proof of the main theorem}{sec03}
First we recall the following lemma from~\cite{K2}.

\cole
\begin{Lemma}[\cite{K2}]
	\label{Lku2}
	Let $u$ be the solution to \eqref{para01} and \eqref{para0} with $v$ and $w$ satisfying \eqref{para03} and~\eqref{para02}. Then for every $r\in (0,1]$ and $p\in \TT^d$, we have
	\begin{align}
		\Vert u( t)\Vert_{L^2(\TT^d)}^2
		\leq
		\exp{(
			P(t^{-1}, r^{-1}, q_0, M_0, M_1)	
			)}
		\Vert u(t)\Vert^2_{L^2 (B_{r}(p))}
   \llabel{EQ18}
	\end{align}
for all $t\in (0,t_0]$, where $P$ is a nonnegative polynomial and $q_0$ is as in \eqref{para05}.
\end{Lemma}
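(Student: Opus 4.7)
The plan is to combine a Carleman-based three-ball inequality at a fixed time with a propagation-of-smallness chain-of-balls argument, treating $u_t$ as a lower order forcing term. At each fixed $t$, equation \eqref{para01} takes the elliptic form $-\Delta u(\cdot,t) = w(t)\cdot\nabla u(t) + v(t) u(t) - u_t(t)$, so the coefficient bounds \eqref{para03}--\eqref{para02} together with control on $u_t$ reduce the problem to a quantitative unique continuation statement for an elliptic operator with bounded coefficients.

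The first step is to derive parabolic smoothing estimates by multiplying \eqref{para01} by powers of $A = -\Delta$ applied to $u$ and using \eqref{para03}--\eqref{para02}, obtaining
\[
\Vert u(\cdot,t)\Vert_{H^s(\TT^d)} + \Vert u_t(\cdot,t)\Vert_{L^2(\TT^d)} \leq C(s, t^{-1}, M_0, M_1)\Vert u(\cdot,t/2)\Vert_{L^2(\TT^d)}
\]
for any $s\geq 0$, with the Dirichlet quotient bound $q_D(t)\leq q_0$ used to control the lower Sobolev norms. For the resulting elliptic-type equation I would then apply a standard Carleman estimate with a pseudoconvex weight $e^{\tau\phi}$ (for instance $\phi(x) = -\log|x - p|$) to obtain, for each $t$, a three-ball inequality of the form
\[
\Vert u(\cdot,t)\Vert_{L^2(B_{2r}(p))} \leq C \Vert u(\cdot,t)\Vert_{L^2(B_r(p))}^\theta \Vert u(\cdot,t)\Vert_{L^2(B_{3r}(p))}^{1-\theta},
\]
with $\theta \in (0,1)$ and $C$ polynomial in the Carleman parameter, which is in turn controlled by $q_0$, $M_0$, $M_1$, and $t^{-1}$ through an Almgren- or Poon-type frequency monotonicity.

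The final step is propagation of smallness: cover $\TT^d$ by $O(r^{-d})$ overlapping balls of radius $r$ and iterate the three-ball inequality along a chain connecting $p$ to any point of the torus. Combining this iteration with the smoothing bound yields the desired estimate $\Vert u(\cdot,t)\Vert_{L^2(\TT^d)}^2 \leq \exp(P(t^{-1}, r^{-1}, q_0, M_0, M_1))\Vert u(\cdot,t)\Vert_{L^2(B_r(p))}^2$. The main obstacle will be keeping the dependence on $t^{-1}$ and $r^{-1}$ polynomial inside the exponent: one must avoid applying smoothing inside every iteration step — which would produce exponential rather than polynomial factors in $t^{-1}$ — and instead use it once at the global level, while carefully tracking how the doubling exponent is amplified along the $O(r^{-d})$-length chain of balls.
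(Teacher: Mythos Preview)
The paper does not prove this lemma; it simply recalls it from \cite{K2}, and the companion Lemma~\ref{Lku3} (also quoted from \cite{K2}) reveals that the underlying argument is a genuinely \emph{parabolic} quantitative unique continuation: the weight and the parameter $\mu$ are tied to the time variable (note the conditions $0<\mu<\sqrt{t/2}$ and the factor $e^{\mu_0^2/4\mu^2}$, characteristic of a backward-heat Carleman weight or Poon's parabolic frequency). Your outline instead freezes $t$ and treats the equation as the elliptic problem $-\Delta u(\cdot,t)=w\cdot\nabla u+vu-u_t$, with $u_t$ regarded as a lower-order forcing.

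This is where the gap lies. In an elliptic Carleman estimate with weight $e^{\tau\phi}$ and cutoff $\chi$ supported in $B_{3r}(p)$, the right-hand side picks up $\Vert e^{\tau\phi}\chi\,u_t\Vert_{L^2}$. Parabolic smoothing only gives $\Vert u_t(\cdot,t)\Vert_{L^2(\TT^d)}\le C(t^{-1},q_0,M_0,M_1)\Vert u(\cdot,t)\Vert_{L^2(\TT^d)}$, so this contribution is of size $e^{\tau\max_{B_{3r}}\phi}\,\Vert u\Vert_{L^2(\TT^d)}$ --- the same Carleman weight as the \emph{small}-ball term, but multiplied by the \emph{global} norm rather than $\Vert u\Vert_{L^2(B_r(p))}$. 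It therefore cannot be absorbed into the large-ball term (which carries the much smaller factor $e^{\tau\min\phi}$), and the three-ball inequality collapses to the triviality $\Vert u\Vert_{L^2(B_{2r})}\le C\Vert u\Vert_{L^2(\TT^d)}$. The term $u_t$ is not lower order in the spatial variables: it is exactly $\Delta u$ plus genuinely lower-order pieces, so no choice of $\tau$ rescues the estimate.

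The fix is to work in space--time from the start, using either a parabolic Carleman weight of heat-kernel type or Poon's monotone frequency for $\partial_t-\Delta$, so that the $u_t$ term is handled structurally by the commutator with the weight rather than estimated as an inhomogeneity. You allude to ``Poon-type frequency monotonicity,'' but only as a device to bound the Carleman parameter in an elliptic estimate; it needs to replace that estimate, not feed into it. The chain-of-balls propagation you describe is then carried out with the parabolic doubling/three-cylinder inequality, which is essentially what Lemma~\ref{Lku3} encodes.
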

\colb

Since the dependence of $P$ on $r$ and $t$, encoded in \cite[Lemma~2.6]{K2},
is important, we recall the following result.

\cole
\begin{Lemma}[\cite{K2}]
	\label{Lku3}
	Let $u$ be the solution to \eqref{para01} and \eqref{para0} with $v$ and $w$ satisfying \eqref{para03} and~\eqref{para02}. Then for every $t\in (0,t_0]$ and every $\mu_0 \in (0,1]$, we have
	\begin{align}
		\Vert u(t)\Vert_{L^2 (\TT^d)}^2
		\leq
		C e^{\mu_0^2/4\mu^2}
		\Vert u(t)\Vert_{L^2 (B_{\mu_0} )}^2,
		\label{EQ83}
	\end{align}
where $\mu>0$ is determined in the following way: Denote
\begin{align}
\begin{split}
	\beta (\mu)
	&=C\left( tq_0+M_1^2 +M_1^2 t
	+M_0^2 t^2 +\frac{1}{t}
	+\frac{1}{t^{1/2}} 
	\right)
	\log \frac{t}{\mu^2}
	\\&\indeq	\indeq
	+
	C\left(
	M_1 t^{1/2}+M_1^2 t +M_0 t +\frac{1}{t}+\frac{1}{t^{1/2}}
	\right);
\end{split}
   \llabel{EQ05}
\end{align}
then $\mu$ is obtained by requiring
\begin{align}
	\frac{1}{\mu^2}
	\geq 
	\frac{C}{\mu_0^2} \log \frac{1}{\mu}
	+
	\frac{C ( \beta (\mu)+1)}{\mu_0^2}
	\label{a01}
\end{align}	
and
\begin{align}
	0<\mu <\min \left\{
	\sqrt{\frac{t}{2}}, \mu_0
	\right\}.
	\label{a02}
\end{align}
\end{Lemma}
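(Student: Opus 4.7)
The plan is to obtain this quantitative unique continuation estimate through a Gaussian weighted energy argument in the spirit of Poon and Escauriaza, relying on a backward heat kernel weight to convert unique continuation into a log-convexity statement for a suitable frequency functional. The parameter $\mu$ will play the role of a scale at the final time $t$, while $\mu_0$ controls the gap between interior and exterior regions.

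First I would fix the center $p\in \TT^d$ of the ball $B_{\delta_0}(p)$ and introduce the backward heat kernel weight
\[
G(x,s) = (t-s+\mu^2)^{-d/2}\exp\!\left(-\frac{|x-p|^2}{4(t-s+\mu^2)}\right),
\]
which satisfies $\partial_s G + \Delta G = 0$ for $s<t+\mu^2$ and becomes $G(x,t) = \mu^{-d}e^{-|x-p|^2/4\mu^2}$ at the final time. Following Poon's parabolic frequency approach, I would track
\[
H(s) = \int_{\TT^d} u(x,s)^2 G(x,s)\,dx, \qquad D(s) = \int_{\TT^d} |\nabla u(x,s)|^2 G(x,s)\,dx
\]
and their quotient. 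Using $\partial_s G+\Delta G=0$ together with the equation \eqref{para01}, integration by parts gives $H'(s) = -2D(s) + R(s)$, where $R(s)$ is a remainder produced by $w\cdot\nabla u+vu$ and controlled via \eqref{para03}--\eqref{para02}. A parallel computation of $(\log H)''$ yields a log-convexity differential inequality whose error terms are controlled by the bounds $M_0$, $M_1$ and by the Dirichlet quotient $q_0$ through the assumption $\sup_{s\in I} q_D(s)\le q_0$.

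Integrating this log-convexity relation from a time $s=t-ct$ (an earlier epoch large enough for the kernel to have spread out, so that $H(s)$ essentially equals $\|u(s)\|_{L^2}^2$ up to harmless factors) down to the final time $s=t$ produces a three-point bound on $H(t)$ in terms of $H(s)$ and a ball-localized quantity. At the final time, the weight satisfies the pointwise lower bound $G(x,t) \ge \mu^{-d}e^{-\mu_0^2/4\mu^2}$ on $B_{\mu_0}(p)$ and, in the exterior, decays fast enough that its contribution to $H(t)$ is negligible relative to the gain one wants to extract. Rearranging this two-region split is what converts the weighted estimate into \eqref{EQ83}, with the Gaussian factor $e^{\mu_0^2/4\mu^2}$ appearing precisely because of the lower bound for $G$ on $B_{\mu_0}(p)$ and the choice $\mu\le\mu_0$ from \eqref{a02}. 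Finally, reducing from $B_{\mu_0}(p)$ to $B_{\delta_0}(p)$ is obtained by inserting the constraint \eqref{a01}, whose two terms encode respectively the cost of the logarithmic propagation and the cost of shrinking the ball to radius $\delta_0$.

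The principal technical obstacle will be tracking the explicit dependence of the propagation cost $\beta(\mu)$ on all the parameters. The logarithmic factor $\log(t/\mu^2)$ arises from integrating the log-convexity inequality over an interval whose length is proportional to $\log(t/\mu^2)$ because of the natural self-similar scaling of the kernel $G$, while the polynomial terms in $t$, $1/t$, $1/t^{1/2}$, $M_0$, $M_1$, $q_0$ reflect the various contributions absorbed from $R(s)$ and from the cross terms in $(\log H)''$. Verifying that the conditions \eqref{a01}--\eqref{a02} suffice to ensure the propagation cost is dominated by the Gaussian gain $e^{\mu_0^2/4\mu^2}$, and that the constant $C$ in \eqref{EQ83} is indeed universal, will require a careful balancing of the optimization parameters throughout the argument.
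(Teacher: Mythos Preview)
The paper does not prove this lemma; it is quoted verbatim from \cite{K2} (specifically Lemma~2.6 there), as the bracketed citation in the lemma header and the sentence preceding it make explicit. There is therefore no proof in the present paper to compare your proposal against.

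That said, your outline is a faithful sketch of the method actually used in \cite{K2}: the argument there is precisely a Poon--Escauriaza parabolic frequency estimate built on the backward Gaussian weight $G(x,s)=(t-s+\mu^2)^{-d/2}\exp(-|x-p|^2/4(t-s+\mu^2))$, with the log-convexity of $H(s)=\int u^2 G$ driving the quantitative unique continuation and the error terms from the lower-order coefficients $v,w$ producing the explicit polynomial $\beta(\mu)$. Your identification of where the factor $e^{\mu_0^2/4\mu^2}$ comes from (the lower bound for $G(\cdot,t)$ on $B_{\mu_0}$) and of the role of the constraint \eqref{a01}--\eqref{a02} is correct in spirit. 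One point to be careful about: in the lemma as stated both $\mu_0$ and $\delta_0$ are free input parameters (the ball on the right-hand side has radius $\delta_0$, not $\mu_0$), and the condition \eqref{a01} is what couples them to the derived scale $\mu$; your description of ``reducing from $B_{\mu_0}$ to $B_{\delta_0}$'' slightly obscures this, since in \cite{K2} the ball of radius $\delta_0$ is the target from the outset and $\mu_0$ enters only as an optimization parameter in the Gaussian splitting.
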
	
\colb	

Note that for any $\mu_0 \in (0,1/2]$, we may take
\begin{align}
	\mu=
	\frac{\mu_0 \sqrt{\frac{t}{2}}}{K\log^{\frac{1}{2}} \frac{1}{\mu_0}},
   \llabel{EQ19}
\end{align}
where $K>0$ is a sufficiently large constant depending on $q_0$, $M_0$ and $M_1$, with the assumptions \eqref{a01} and \eqref{a02} on $\mu$ satisfied. Thus, from \eqref{EQ83}, it follows that there exists a constant $C>0$ such that
\begin{align}
	\Vert u(t)\Vert_{L^2 (\TT^d)}^2
	\leq
	C^
	{K^2 \log (\frac{1}{\mu_0}) \frac{1}{t}}
	\Vert u(t)\Vert_{L^2 (B_{\mu_0})}^2,
	\label{EQ84}
\end{align}
for any $\mu_0\in (0,1/2]$ and $t\in (0,t_0]$.
%


The following lemma provides a Gevrey estimate for the parabolic
equation~\eqref{para01}--\eqref{para0}, combined with the backward
uniqueness for \eqref{para01}--\eqref{para0}.

\cole
\begin{Lemma}
	\label{Lgevrey}
Denote $A=-\Delta$.
	Let $u(x,t)$ be the solution to \eqref{para01} with $u_0\in L^2 (\TT^d)$. Suppose that $v$ and $w$ satisfy \eqref{para03}--\eqref{para02} and \eqref{EQ52}--\eqref{EQ72}. Then we have
	\begin{align}
		\Vert e^{\delta A^{\beta/2}} u(t)		
		\Vert_{L^2 (\TT^d)}
		\leq
		C^{(
		\delta ^{2/(2-\beta)} t^{-\beta/(2-\beta)} + t(K_w^2 +K_v  +M_1+M_0 +q_0) )}
		\Vert u(t)\Vert_{L^2 (\TT^d)}
   \llabel{EQ20}
	\end{align}
for any $t\in (0,t_0]$, where $C>0$ is a constant.
\end{Lemma}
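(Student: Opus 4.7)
The plan is to combine a Foias--Temam type Gevrey energy estimate running forward in time with a backward uniqueness bound derived from the Dirichlet quotient. Set $\phi(s)=\delta s/t$ for $s\in[0,t]$, so that $\phi(0)=0$, $\phi(t)=\delta$, and $\phi'(s)\equiv\delta/t$; in particular $\phi(s)\leq\delta$ on the whole interval, which is precisely what lets us use the Gevrey assumptions \eqref{EQ52}--\eqref{EQ72} on $v$ and $w$ with weight $\phi(s)$ in place of $\delta$. Let $\Phi(s)=\|e^{\phi(s)A^{\beta/2}}u(s)\|_{L^2}^{2}$. Since $A$ and $e^{\phi A^{\beta/2}}$ commute, differentiating $\Phi$ in $s$ and substituting \eqref{para01} gives
\begin{equation*}
\begin{split}
\tfrac{1}{2}\tfrac{d}{ds}\Phi
&=-\|A^{1/2}e^{\phi A^{\beta/2}}u\|_{L^2}^{2}
+\phi'\|A^{\beta/4}e^{\phi A^{\beta/2}}u\|_{L^2}^{2}
\\ &\quad
+\bigl(e^{\phi A^{\beta/2}}(w\cdot\nabla u),\,e^{\phi A^{\beta/2}}u\bigr)
+\bigl(e^{\phi A^{\beta/2}}(vu),\,e^{\phi A^{\beta/2}}u\bigr).
\end{split}
\end{equation*}

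The next step is to absorb the three non-dissipative terms into $-\|A^{1/2}e^{\phi A^{\beta/2}}u\|_{L^2}^{2}$. For the Gevrey growth term, Young's inequality $\phi'|\xi|^{\beta}\leq\epsilon|\xi|^{2}+C\epsilon^{-\beta/(2-\beta)}(\phi')^{2/(2-\beta)}$ (which uses $\beta\leq1$) yields
\begin{equation*}
\phi'\|A^{\beta/4}e^{\phi A^{\beta/2}}u\|_{L^2}^{2}\leq\epsilon\|A^{1/2}e^{\phi A^{\beta/2}}u\|_{L^2}^{2}+C\epsilon^{-\beta/(2-\beta)}(\phi')^{2/(2-\beta)}\Phi.
\end{equation*}
For the two product terms, the key tool is the Gevrey algebra estimate $\|e^{\phi A^{\beta/2}}(fg)\|_{L^2}\lesssim\|e^{\phi A^{\beta/2}}f\|_{H^{s}}\|e^{\phi A^{\beta/2}}g\|_{L^2}$ for any $s>d/2$, which follows from the Fourier-side subadditivity $|j+l|^{\beta}\leq|j|^{\beta}+|l|^{\beta}$ (valid for $\beta\in(0,1]$) combined with the Sobolev algebra. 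Since $\phi(s)\leq\delta$ and the embedding $H^{2d}\hookrightarrow H^{s}$ holds for some $s>d/2$, the assumptions \eqref{EQ52}--\eqref{EQ72} give $\|e^{\phi A^{\beta/2}}(vu)\|_{L^2}\lesssim K_{v}\|e^{\phi A^{\beta/2}}u\|_{L^2}$ and $\|e^{\phi A^{\beta/2}}(w\cdot\nabla u)\|_{L^2}\lesssim K_{w}\|A^{1/2}e^{\phi A^{\beta/2}}u\|_{L^2}$. Applying Cauchy--Schwarz, Young's inequality, and choosing $\epsilon$ small enough to absorb the dissipation yields
\begin{equation*}
\frac{d\Phi}{ds}\leq C\bigl((\phi')^{2/(2-\beta)}+K_{w}^{2}+K_{v}\bigr)\Phi.
\end{equation*}

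Integrating from $0$ to $t$ and using $\int_{0}^{t}(\delta/t)^{2/(2-\beta)}\,ds=\delta^{2/(2-\beta)}t^{-\beta/(2-\beta)}$ produces
\begin{equation*}
\|e^{\delta A^{\beta/2}}u(t)\|_{L^2}^{2}=\Phi(t)\leq\|u_{0}\|_{L^2}^{2}\exp\bigl(C\bigl[\delta^{2/(2-\beta)}t^{-\beta/(2-\beta)}+t(K_{w}^{2}+K_{v})\bigr]\bigr).
\end{equation*}
The bound is converted into one in terms of $\|u(t)\|_{L^2}$ via backward uniqueness: from $\tfrac{d}{ds}\|u\|_{L^2}^{2}=-2\|\nabla u\|_{L^2}^{2}+2(w\cdot\nabla u,u)+2(vu,u)$ together with $q_{D}(s)\leq q_{0}$, $|(w\cdot\nabla u,u)|\leq M_{1}q_{0}^{1/2}\|u\|_{L^2}^{2}$, and $|(vu,u)|\leq M_{0}\|u\|_{L^2}^{2}$, one obtains $\tfrac{d}{ds}\log\|u\|_{L^2}^{2}\geq-C(q_{0}+M_{1}+M_{0})$, so $\|u_{0}\|_{L^2}^{2}\leq e^{Ct(q_{0}+M_{1}+M_{0})}\|u(t)\|_{L^2}^{2}$. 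Chaining the two estimates delivers the claim. The main technical obstacle is establishing the Gevrey algebra estimate cleanly under the constraint that $\phi(s)$ stays below $\delta$; the linear choice $\phi(s)=\delta s/t$ is precisely what synchronizes the endpoint $\phi(t)=\delta$ with the assumed radius of Gevrey regularity of $v$ and $w$, after which the remaining steps are standard energy and backward uniqueness arguments.
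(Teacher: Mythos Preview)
Your proof is correct and follows essentially the same approach as the paper. Your time-dependent weight $\phi(s)=\delta s/t$ is exactly the paper's choice $\alpha s$ with $\alpha=\delta/t$ fixed at the outset, and the three terms are handled identically: Young's inequality for the Gevrey growth term, a Foias--Temam product estimate (your subadditivity-plus-Sobolev sketch is the content of the cited \cite[Lemma~2.1]{FT}) for the $w\cdot\nabla u$ and $vu$ terms, followed by Gr\"onwall and the backward lower bound on $\|u(s)\|_{L^2}$. The only cosmetic difference is in the backward step: the paper integrates $(w\cdot\nabla u,u)$ by parts to obtain $-\tfrac12\int(\operatorname{div}w)u^{2}$ and hence the clean factor $M_1$, whereas you use Cauchy--Schwarz to get $M_1 q_0^{1/2}$ and then absorb it into $C(M_1+q_0)$; both lead to the stated exponent.
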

\colb

\begin{proof}
Fix $t\in (0,t_0]$.
	Let $\alpha\in (0, \delta/t]$.
	By taking the $L^2$-inner product of \eqref{para01} with $e^{\alpha t A^{\beta/2}} u$, we obtain
	\begin{align}
		\begin{split}
			&
		\frac{1}{2}
		\frac{d}{dt}
		\Vert e^{\alpha t A^{\beta/2}} u(t)\Vert_{L^2 (\TT^d)}^2
		+
		\Vert A^{\frac{1}{2}} e^{\alpha t A^{\beta/2}} u\Vert_{L^2 (\TT^d)}^2
		\\&=
		(e^{\alpha t A^{\beta/2}} (w\cdot \nabla u), e^{\alpha t A^{\beta/2}} u)
		+
		\alpha	
		\Vert A^{\beta/4}
		e^{\alpha t A^{\beta/2}} u
		\Vert_{L^2}^2
		+
		(e^{\alpha t A^{\beta/2}} (vu), e^{\alpha t A^{\beta/2}} u)
		\\&
		=
		I_1+I_2+I_3.
		\label{EQ53}
		\end{split}
	\end{align}
The term $I_1$ is estimated as in \cite[Lemma~2.1]{FT}, leading to
	\begin{align}
		\begin{split}
				I_1
		&\leq
		C \Vert e^{\alpha t A^{\beta/2}} (A^{d}+I) w \Vert_{L^2 (\TT^d)}
		\Vert e^{\alpha t A^{\beta/2}} A^{\frac{1}{2}} u \Vert_{L^2 (\TT^d)}
		\Vert e^{\alpha t A^{\beta/2}} u \Vert_{L^2 (\TT^d)}
		\\&
		\leq
		\frac{1}{4}
		\Vert e^{\alpha t A^{\beta/2}} A^{\frac{1}{2}} u \Vert_{L^2 (\TT^d)}^2
		+
		C K_w^2 \phi (t)
		,
		\label{EQ80}
		\end{split}
	\end{align}
for some constant $C>0$,
where we denoted $\phi(t)= \Vert e^{\alpha t A^{\beta/2}} u(t)\Vert_{L^2 (\TT^d)}^2$. 
For the term $I_2$, we appeal to Young's inequality to get
	\begin{align}
		\begin{split}
		I_2
		&=
		\alpha
		\sum_{j\in \ZZ^d}
		|j|^\beta
		e^{2\alpha t |j|^{\beta}} |u_j|^2
		\leq
		\frac{1}{4}
		\sum_{j\in \ZZ^d}
		|j|^2 e^{2\alpha t |j|^\beta} |u_j|^2
		+
		C \alpha^{2/(2-\beta)}
		\sum_{j\in \ZZ^d}
		e^{2\alpha t |j|^\beta} |u_j|^2
		\\&
		=
		\frac{1}{4}
		\Vert A^{\frac{1}{2}} e^{\alpha t A^{\beta/2}} u\Vert_{L^2 (\TT^d)}^2
		+
		C \alpha^{2/(2-\beta)}
		\phi (t),
				\label{EQ54}
	\end{split}
	\end{align}
for some constant $C>0$.
For any $\tau\geq 0$, we follow the proof of \cite[Lemma~2.1]{FT} to obtain
\begin{align}
	|(e^{\tau A^{\beta/2}} (uv), e^{\tau A^{\beta/2}} u)|
	\leq
	C
	\Vert (A^d+I) e^{\tau A^{\beta/2}} v \Vert_{L^2 (\TT^d)}
	\Vert e^{\tau A^{\beta/2}} u\Vert_{L^2 (\TT^d)}
	\Vert e^{\tau A^{\beta/2}} u\Vert_{L^2 (\TT^d)}.
	\label{EQ51}
\end{align}
	For the term $I_3$, we use \eqref{EQ52} and \eqref{EQ51}, obtaining
	\begin{align}
	I_3
	\leq 	
	C
	\Vert (A^d+I) e^{\delta A^{\beta/2}} v \Vert_{L^2 (\TT^d)}
	\Vert e^{\alpha t A^{\beta/2}} u\Vert_{L^2 (\TT^d)}^2
	\leq 
	CK_v \phi (t),
	\label{EQ55}
	\end{align}
for some constant $C>0$, where we used $\alpha t\leq \delta$ in the first inequality.
From \eqref{EQ53}--\eqref{EQ54} and \eqref{EQ55} it follows that
\begin{align}
	\phi' (t)
	\leq 
	C(\alpha^{2/(2-\beta)}+ K_w^2+K_v)
	\phi (t),
   \llabel{EQ21}
\end{align}
from where
\begin{align}
	\phi (t) 
	\leq 
	\exp (C t(\alpha^{2/(2-\beta)}+K_w^2+K_v))  \phi (0).
	\llabel{EQ56}
\end{align}
Letting $\alpha=\delta/t$, we get
\begin{align}
	\Vert e^{\delta A^{\beta/2}} u(t)\Vert_{L^2 (\TT^d)}
	\leq
		\exp (C t (\delta^{2/(2-\beta)}  t^{-2/(2-\beta)}
		+
		K_w^2
		+
		K_v)) \Vert u_0\Vert_{L^2 (\TT^d)}.
		\label{EQ70}
\end{align}

Taking the inner product of \eqref{para01} with $u$ and using $\Vert \nabla u\Vert^2_{L^2 (\Omega)}= q_{\DD}(t) \Vert u\Vert_{L^2(\Omega)}^2$, where $q_{\DD}(t)$ is defined in \eqref{para04},
we obtain
\begin{align}
	\frac{1}{2}
	\frac{d}{dt}
	\Vert u\Vert^2_{L^2 (\TT^d)}
	+q_{\DD}(t) \Vert u(t)\Vert_{L^2 (\TT^d)}^2
	=
	-\frac{1}{2}
	\int_{\TT^d} (\div w)
	u^2 \,dx
	+
	\int_{\TT^d}
	v u^2 \,dx.
   \llabel{EQ22}
\end{align}
Using H\"older's inequality, it follows that
\begin{align}
	\begin{split}
		\frac{1}{2}
	\frac{d}{dt}
	\Vert u\Vert^2_{L^2 (\TT^d)}
	&\geq 
	- (
	\Vert \div w\Vert_{L^\infty (\TT^d)}
	+
	\Vert v(t)\Vert_{L^\infty (\TT^d)} + q_{\DD}(t))
	\Vert u(t)\Vert_{L^2 (\TT^d)}^2
	\\&
	\geq 
	-(M_1+M_0 + q_0)
	\Vert u(t)\Vert_{L^2 (\TT^d)}^2.	
	\end{split}
   \llabel{EQ23}
\end{align}
Applying Gronwall's argument, we arrive at
\begin{align}
	\Vert u(t)\Vert_{L^2 (\TT^d)}^2
	\geq 
	\exp (-2t(M_1+M_0 + q_0) )
	\Vert u_0\Vert_{L^2 (\TT^d)}^2
	,
	\label{EQ71}
\end{align}
and the proof is concluded by combining \eqref{EQ70} and~\eqref{EQ71}.
\end{proof}

Next we provide an upper bound estimate of Hausdorff measure of a set $N\subset\RR^d$.
\cole
\begin{Lemma}
\label{L03}
Let $r>0$. Suppose that $N\subset B_{2r}$ is a subset of a countable union of $(d-1)$-dimensional $C^1$ graphs.
Let $T=\{p_{\pm 1}, \ldots, p_{\pm d}\}$, where $p_j \in B_{r/10d} (r e_j)$ with the standard basis $e_j$ in $\mathbb{R}^d$, for $j = \pm 1, \ldots, \pm d$.
If $N$ has at most $n$ intersections with any line through any $p\in T$, then
	\begin{align}
		\mathcal{H}^{d-1} (N)
		\leq
		C n r^{d-1}
		,
		\label{EQ39}
	\end{align}
where $C>0$ is a constant independent of $N$, $n$ and~$r$.
\end{Lemma}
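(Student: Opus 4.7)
The plan is to decompose $N$ into $2d$ pieces $N_{\pm 1},\ldots,N_{\pm d}$, each one adapted to the radial projection $\pi_j\colon\RR^d\setminus\{p_j\}\to S^{d-1}$, $\pi_j(x)=(x-p_j)/|x-p_j|$, emanating from the corresponding point $p_j\in T$. On each piece $N_j$ the Jacobian of $\pi_j$ will be bounded below by $c/r^{d-1}$, so the area formula combined with the hypothesis that every line through $p_j$ meets $N$ in at most $n$ points will give $\mathcal{H}^{d-1}(N_j)\le Cnr^{d-1}$; summing over the $2d$ indices then yields \eqref{EQ39}.

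The key geometric input is the following transversality claim: for every $x\in B_{2r}$ and every unit vector $\nu\in S^{d-1}$ there exists $j\in\{\pm 1,\ldots,\pm d\}$ with $|\langle\nu,\,x-p_j\rangle|\ge c_d\,r$, which (using $|\langle\nu,y\rangle|\le|y|$) also gives $|x-p_j|\ge c_d r$. To prove it I would pair the index $j$ with $-j$. Since $p_{\pm k}\in B_{r/10d}(\pm re_k)$, we have $|p_k-p_{-k}-2re_k|\le r/5d$, and therefore
\[
\max\bigl(|\langle\nu,x-p_k\rangle|,\,|\langle\nu,x-p_{-k}\rangle|\bigr)\ge\tfrac{1}{2}\bigl|\langle\nu,\,p_k-p_{-k}\rangle\bigr|\ge r\,|\nu\cdot e_k|-\tfrac{r}{10d}.
\]
Choosing the coordinate $k$ with $|\nu\cdot e_k|\ge 1/\sqrt d$ and picking the better of the two signs produces the required index $j$, with a constant $c_d\ge 9/(10\sqrt d)$ depending only on $d$.

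Using the rectifiability of $N$ (which, as a countable union of $C^1$ graphs, carries an $\mathcal{H}^{d-1}$-a.e.\ defined unit normal $\nu(x)$), I would set $j(x)$ equal to the smallest index supplied by the transversality claim for $\nu=\nu(x)$, and define $N_j=\{x\in N:j(x)=j\}$. Restricted to a smooth $(d-1)$-graph, $\pi_j$ has Jacobian
\[
J\pi_j(x)=\frac{|\langle\nu(x),\,x-p_j\rangle|}{|x-p_j|^{d}},
\]
and combining the transversality bound with $|x-p_j|\le|x|+|p_j|\le 4r$ produces $J\pi_j(x)\ge c'_d/r^{d-1}$ pointwise on $N_j$.

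The concluding step is the area formula, which gives
\[
\frac{c'_d}{r^{d-1}}\,\mathcal{H}^{d-1}(N_j)\le\int_{N_j}J\pi_j\,d\mathcal{H}^{d-1}=\int_{S^{d-1}}\#\bigl(\pi_j^{-1}(\omega)\cap N_j\bigr)\,d\sigma(\omega)\le n\,|S^{d-1}|,
\]
since each fiber $\pi_j^{-1}(\omega)$ is contained in a line through $p_j$ and therefore meets $N$ in at most $n$ points by hypothesis. Thus $\mathcal{H}^{d-1}(N_j)\le Cnr^{d-1}$, and summing over the $2d$ values of $j$ finishes the argument. The main obstacle is the transversality claim with purely dimensional constants; once that is established, the remainder is a routine application of the area formula to a $(d-1)$-rectifiable set.
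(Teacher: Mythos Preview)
Your argument is correct and follows the same route as the paper: a transversality claim (for each $q\in N$ with unit normal $\nu_q$, some $p_j\in T$ makes a uniformly good angle with $q-p_j$) followed by radial projection onto a sphere and the area formula to exploit the $n$-point intersection bound. Your transversality proof is in fact a bit cleaner than the paper's---working with $|\langle\nu,x-p_j\rangle|$ directly rather than with the ratio $|\langle\nu,x-p_j\rangle|/|x-p_j|$ avoids the case split according to whether $q$ lies near $\pm r e_k$---and you spell out the area-formula computation that the paper leaves implicit in the phrase ``$F_p$ is comparable to $B_4(p)$.''
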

\colb

\begin{proof}[Proof of Lemma~\ref{L03}]
Without loss of generality, we may assume that $r=1$ and establish
	\begin{align}
	\mathcal{H}^{d-1} (N)
	\leq
	C n
	,
	\label{EQ47}
\end{align}
for some constant $C>0$.
Fix $q\in N$. 
We claim that there are at most $n$ pairwise non-parallel unit normal vectors to $N$ at~$q$. Suppose, for the sake of contradiction, that there are $n+1$ pairwise non-parallel unit normal vectors at~$q$.	
There exists $\kappa>0$ and a line $l$ through a point $p\in T$ such that
\begin{align*}
	\card (N \cap B_\kappa (q) \cap l) \geq n+1.
\end{align*}
The existence of such a line is obtained by perturbing the line connecting $p$ and $q$ and applying the implicit function theorem.
This contradicts the assumption that every line through $p\in T$ intersects $N$ in at most $n$ points.
Let $\nu_q= (\mu_1, \ldots, \mu_d)$ be a unit normal vector to $N$ at $q$ and $|\mu_k| = \max \{|\mu_1|, \ldots, |\mu_d|\}$ for some $k\in \{1,2,\ldots, d\}$.
We claim that there exists some $p \in T$ such that
\begin{align}
	\begin{split}
		\frac{|(q -p) \cdot \nu_q |}{|q - p|}	
		\geq
		\frac{1}{4\sqrt{d}}
		\label{EQ44}
	\end{split}
\end{align}
and
\begin{align}
	|q-p| \geq 1/10d.
	\label{EQ45}
\end{align}
Let $p_j=  e_j+ w_j /10 d$, where $w_j \in B_1$, for $j = \pm 1, \ldots, \pm d$.
If $q\in N \setminus ( B_{1/5d} (e_{k}) \cup  B_{1/5d} (e_{-k}))$, we get
\begin{align}
	\begin{split}
		\frac{|(q -p_k) \cdot \nu_q |}{|q - p_k|}	
		+
		\frac{|(q -p_{-k}) \cdot \nu_q |}{|q - p_{-k}|}	
		&
		\geq 
		\left|(e_k + 
		\frac{w_k - w_{-k}}{20d}
		)
		 \cdot  \nu_q
		 \right|
		\geq
		|\mu_k| - \sum_{j=1}^d \frac{1}{10d} |\mu_j|
		\geq 
		\frac{1}{2 \sqrt{d}}
		,
		\llabel{EQ41}
	\end{split}
   \llabel{EQ24}
\end{align}
from where
\begin{align}
	\max_{ p \in \{p_k, p_{-k}\}} \frac{|(q -p) \cdot \nu_q |}{|q - p|} 
	\geq 
	\frac{1}{4 \sqrt{d}}
	,
	\llabel{EQ37}
\end{align}
and
\begin{align}
	\min_{p \in \{p_k, p_{-k}\}} |q-p| \geq 1/10d	
	.
	\llabel{EQ60}
\end{align}
If $q\in B_{1/5d} (e_{k})$, then $q= e_k + w/5d$ for some $w\in B_1$. 
Consequently, we have
\begin{align}
	\begin{split}
		\frac{|(q -p_{-k}) \cdot \nu_q |}{|q - p_{-k}|}	
		\geq
		\frac{1}{2}
		|(e_k + w/5d - e_{-k} - w_{-k}/10d) \cdot \nu_q|
		\geq
		|\mu_k|
		-
		\frac{3}{20d} \sum_{j=1}^d |\mu_j|
		\geq
		\frac{1}{4\sqrt{d}}
		\label{EQ43}
	\end{split}
\end{align}
and $|q-p_{-k}| \geq 1/10d$.
For the case $q\in B_{1/5d} (e_{-k})$, we proceed analogously as in \eqref{EQ43}  and thus complete the proof of the claim.
Thus by continuity, there exists a sufficiently small constant $\epsilon>0$ independent of $q$ and $p$ such that
\begin{align}
	\begin{split}
		\frac{|(q' -p) \cdot \nu_{q} |}{|q' - p|}	
		\geq
		\frac{1}{8\sqrt{d}}
		,
		\llabel{EQ46}
	\end{split}
\end{align}
for any $q' \in B_\epsilon (q) \cap N$.

For each $p\in T$, denote by $F_p$ the collection of points $q\in N$ such that there exists a unit normal vector $\nu_q$ to $N$ at $q$ with \eqref{EQ44} and~\eqref{EQ45}.
Since $N$ has at most $n$ intersections with any line through $p$, we infer that the $(d-1)$-dimensional Hausdorff measure of $F_p$ is bounded by $n$ times the $(d-1)$-dimensional Hausdorff of the sphere $\mathbb{S}^{d-1}_4 (p) =\{x\in \RR^d: |x-p|=4\}$, up to a universal constant depending on~$d$.
Namely, we have
\begin{align}
	\mathcal{H}^{d-1}
	(F_p)
	\leq
	Cn \mathcal{H}^{d-1}
	( \mathbb{S}^{d-1}_4 (p))
	\leq
	Cn
	,
   \llabel{EQ17}
\end{align}
where $C>0$ is a constant. 
Therefore, the proof of \eqref{EQ47} is completed by applying the above inequality to all $p\in T$ and noting that $N \subset \cup_{p\in T} F_p$.
Finally, rescaling \eqref{EQ47} to a ball of radius $2r$ gives the factor $r^{d-1}$ in~\eqref{EQ39}. 
\end{proof}

\begin{Remark}
	\label{R02}
{\rm
Note that the minimal number of points needed in $T$ is~$d+1$. 
Indeed, we use a simple fact that if the determinant of $d$ vectors is nonzero, then these $d$ vectors are non-coplanar.	
Consequently, the angle between $q-p$ and $\nu_q$ in \eqref{EQ44} is bounded away from zero.
}
\end{Remark}

\begin{proof}[Proof of Theorem~\ref{T01}]

Fix $p\in \TT^d$ and $t\in (0,t_0]$.
Let $r\in (0,1/2]$ be a small constant to be determined below.
For each $j\in \{\pm 1, \ldots, \pm d \}$, there exist a point $q_j \in B_{r/10d} (p+ re_j)$ such that 
\begin{align}
	|u(q_j)| 
	\geq 
	\frac{1}{2}
	\Vert u\Vert_{L^\infty (B_{r/10d} (p+ re_j) )},
	\llabel{eq30},
\end{align}
where $e_j$ is the standard basis in~$\RR^d$. Thus, from Lemma~\ref{Lku2} and \eqref{EQ84} it follows that
\begin{align}
	\begin{split}
	\Vert u(t)\Vert_{L^2 (\TT^d)} 	
	&\leq
		C^{
	K^2 \log (\frac{1}{r})\frac{1}{t}}
		\Vert u(t)\Vert_{L^2 (B_{r/10d} (p+ re_j) )} 
		\\&
		\leq
		r^{\frac{d}{2}}
	C^{
		K^2 \log (\frac{1}{r})\frac{1}{t}}
	\Vert u(t)\Vert_{L^\infty (B_{r/10d} (p+ re_j) )} 	
	\leq 
	2	r^{\frac{d}{2}}
C^{
	K^2 \log (\frac{1}{r})\frac{1}{t}}
|u(q_j) |.
\label{EQ40}
	\end{split}
\end{align}
Fix $j\in \{\pm 1, \ldots, \pm d \}$ and a line $l$ through~$q_j$.
We assume that $u$ has $n$ zeros in the line segment $l \cap B_{2r} (p)$, counting the multiplicity. We rotate the coordinate so that $u$ is a function of a single variable and we use $u^{(m)}$ to denote the $m$-th derivative of~$u$.
Let $\{x_1, \ldots, x_k\}$ be $k$ distinct zeros in $l \cap B_{2r} (p)$ with multiplicities $m_1, \ldots, m_k\geq 1$ and $m_1+ \cdots+ m_k = n$.
By the Hermite interpolation theorem, there exists a unique interpolation polynomial $P_{n-1}$ of $u$, of degree less than or equal to $n-1$, satisfying
\begin{align}
	P^{(m)}_{n-1} (x_l)
	= 
	u^{(m)} (x_l)
	\comma
	0 \leq m \leq m_l -1
	\comma
	l=1,\ldots,k.
	\label{EQ15}
\end{align}
Moreover, for any $x\in l \cap B_{2r} (p)$, there exists some $\xi \in l \cap B_{2r} (p)$ depending on $x$ such that
\begin{align}
	u(x) - P_{n-1} (x)
	=
	\frac{(x-x_1)^{m_1} \cdots (x-x_k)^{m_k}}{n!} 
	u^{(n)} (\xi)
	,
   \llabel{EQ25}
\end{align}
from where
\begin{align}
	\begin{split}
	\Vert u- P_{n-1}\Vert_{L^\infty (l \cap B_{2r} (p) )}
	\leq
	\frac{\sup_{x\in l \cap B_{2r} (p)}  |(x-x_1)^{m_1} \cdots (x-x_k)^{m_k}|}{n!} 
	\Vert  u^{(n)} \Vert_{L^\infty (l \cap B_{2r} (p))}	
	.
	\end{split}
   \llabel{EQ27}
\end{align}
From \eqref{EQ15} and the uniqueness it follows that $P_{n-1} \equiv 0$.
Consequently, we get
\begin{align}
	\Vert u\Vert_{L^\infty (l \cap B_{2r} (p))}
	\leq
	\Vert u-P_{n-1}\Vert_{L^\infty (l \cap B_{2r} (p))}
	+
	\Vert P_{n-1}\Vert_{L^\infty (l \cap B_{2r} (p))}
	\leq
	\frac{C^n r^n}{n!} 
	\Vert  u^{(n)} \Vert_{L^\infty (l \cap B_{2r} (p))}
	,
	\label{EQ31}
\end{align}
where $C>0$ is a constant.
From \eqref{EQ40} and \eqref{EQ31} it follows that
\begin{align}
	\begin{split}
	\Vert u(t)\Vert_{L^2 (\TT^d)}
		&
	\leq	
	2	r^{\frac{d}{2}}
	C^{
		K^2 \log (\frac{1}{r})\frac{1}{t}}
\Vert u(t)\Vert_{L^\infty (l \cap B_{2r}(p))}
	\leq
	\frac{C^{n} r^{n+\frac{d}{2}}}{n!}
	C^{ K^2 \log (\frac{1}{r})\frac{1}{t}}
	\Vert  u^{(n)} \Vert_{L^\infty (l \cap B_{2r}(p))}
	\llabel{EQ12}
	,
	\end{split}
\end{align}
since $q_j\in l\cap B_{2r} (p)$.
Appealing to the Sobolev inequality and Lemma~\ref{Lgevrey}, we obtain
\begin{align}
	\begin{split}
		\Vert u(t)\Vert_{L^2 (\TT^d)}
		&
		\leq
		\frac{C^{n} r^{n+\frac{d}{2}}}{n!}
		C^{
			K^2 \log (\frac{1}{r})\frac{1}{t}}
		\Vert (A^d+I) A^{\frac{n}{2}} u(t) \Vert_{L^2 (\TT^d)}		
		\\&
		\leq
		C_0^{n+
		K^2 \log (\frac{1}{r})\frac{1}{t}+
		\delta ^{2/(2-\beta)} t^{-\beta/(2-\beta)} + t(K_w^2 +K_v+M_1  +M_0 +q_0) }
		\\&\indeq\times
		\frac{r^{n+\frac{d}{2}}}{n!}
		 (n+2d)!^{\frac{1}{\beta}}
		\Vert u(t) \Vert_{L^2 (\TT^d )}.
		\label{EQ32}
	\end{split}
\end{align}
for some constant $C_0>0$.
Denote by $C_1:= K_w^2 +K_v  +M_1+M_0 +q_0$ and $C_2:=\delta^{2/(2-\beta)}$. In order for the inequality \eqref{EQ32} to hold, it is necessary that
\begin{align}
	C_0^{n+K^2 \log (\frac{1}{r})\frac{1}{t}+C_1 t +
	C_2 t^{-\beta/(2-\beta)} }
	\frac{ r^{n+\frac{d}{2}}}{n!}
	(n+2d)!^{\frac{1}{\beta}}
	\geq 
	1.
	\label{EQ14}
\end{align}
Note that by Stirling formula there exists a universal constant $C_3>0$ such that $C_3^n n^{1/2+n}<n! \leq n^n$ for all $n\in \mathbb{N}$. 
Taking the logarithm of \eqref{EQ14} and appealing to the Stirling formula, we obtain
\begin{align}
\begin{split}
	&
	\left(n+\frac{K^2}{t} \log \frac{1}{r}
	+
	C_1 t 
	+
	C_2 t^{-\beta/(2-\beta)}
	\right) 
	\log C_0
	\\&\indeq
	+
	\left(n+\frac{d}{2} \right) 
	\log r
	-
	\left(\frac{1}{2}+n \right)  \log n
	-
	n \log C_3
	+
	\frac{n+2d}{\beta} \log (n+2d)
	\geq 
	0.
	\label{EQ13}	
\end{split}
\end{align}

Let $n_0$ be the largest integer less than or equal to $
	2K^2 \log (\frac{1}{r})\frac{1}{t}+2d
	+1$.
In the left side of \eqref{EQ13} we replace $n$ with $n_0$, and the resulting quantity is bounded from above by
\begin{align}
	\begin{split}
	&
 	\left(C_1 t 
 	+
 	C_2 
 	t^{-\frac{\beta}{2-\beta}}
 \right)\log C_0
 	+
 	n_0  
 	\log \left(\frac{C_0^2 r 2^{\frac{1}{\beta}}  n_0^{\frac{1}{\beta}-1} }{C_3} \right)
 	+\frac{2d}{\beta} \log (2n_0)
 	.
 	\label{EQ50}
	\end{split}
\end{align}
Let 
\begin{align}
	r=
	\frac{t^{\frac{1}{\beta} -1}}{\log^{\frac{1}{\beta}-1} (\frac{1}{t})M},
	\label{EQ85}
\end{align}
where $M\geq 1$ is a sufficiently large constant depending on $C_0, C_1, C_2$, $C_3$ and~$K$. Then the quantity in \eqref{EQ50} is less than $0$ for all $t\in (0,t_0]$, since $\beta/(2-\beta)\leq 1$. Consequently, the value $n = n_0$ does not satisfy \eqref{EQ13} and thus does not satisfy~\eqref{EQ32}. We infer that $u$ has less than $n_0$ zeros in $l\cap B_{2r} (p)$ for fixed $r>0$ as in~\eqref{EQ85}.
Assume, for the sake of contradiction, that
	the order of vanishing of $u(x,t)$ is infinite for some $x\in\TT^d$ and some $t\in (0,t_0]$. Then, for the line $l$ passing $q_j$ and $x$,
	the restriction of $u$ to $l\cap B_{2r} (x)$ has a zero at $x$ of infinite multiplicity. 
	This contradicts with the fact that the multiplicity of zeros of $u$ is bounded by $n_0$ in $l\cap B_{2r} (p)$, uniformly for any $p\in \TT^d$.

For $t\in (0,t_0]$, since the order of vanishing for $u(x,t)$ is finite for any $x\in \TT^d$,
we infer from \cite[Claim~2]{M} that $N(t)$ is contained in a union of $(d-1)$-dimensional $C^1$ graphs. Without loss of generality, we may assume that the union is taken on a countable set, as otherwise we consider any countable subset of the uncountable  union and take the supremum.
%
%
From Lemma~\ref{L03} it follows that
\begin{align}
	\mathcal{H}^{d-1}
	(N(t) \cap B_{2r} (p))
	\leq
	C n_0 r^{d-1}
	.
	\llabel{EQ34}
\end{align}
We cover $\TT^d$ using a finite number of balls $B_{2r} (p)$ to get
\begin{align}
	\mathcal{H}^{d-1}
	(N(t))
	\leq 
	C n_0 r^{d-1} \frac{1}{r^{d}}
	\leq
	C 	\left(\frac{1}{t} \right)^{\frac{1}{\beta}} \log^{2 (\frac{1}{\beta}-1)} \left(\frac{1}{t} \right)
	,
	\llabel{EQ65}
\end{align}
where $C>0$ is a sufficiently large constant depending on $q_0$, $M_0$, $M_1$, $K_v$ and~$K_w$.
The proof of \eqref{EQ26} is thus completed.
\end{proof}

\colb
\section*{Acknowledgments}
IK was supported in part by the NSF grant DMS-2205493.

%
%

\end{document}